\documentclass[11pt]{amsart}

\usepackage[ansinew]{inputenc}    
\usepackage{graphicx}             


\textheight 23.0cm \topmargin=-1.0cm \textwidth=15.0cm
\hoffset=-1.5cm

\newtheorem{theorem}{Theorem}
\theoremstyle{plain}

\newtheorem{corollary}{Corollary}

\newtheorem{lemma}{Lemma}

\newtheorem{proposition}{Proposition}
\newtheorem{remark}{Remark}

\numberwithin{equation}{section}

\begin{document}

\title[The trisector curve is transcendental]{The distance trisector curve is transcendental}

\thanks{This work is partially supported by grant MTM2009-08933 from the Spanish Ministry of Science and Innovation, and by CONACYT (Mexico), project 106 923.}

\author{J. Monterde, F. Ongay}
\address{J.Monterde. Dep. de Geometria i
Topologia, Universitat de Val\`encia, Avd. Vicent Andr\'es
Estell\'es, 1, E-46100-Burjassot (Val\`encia), Spain}
\email{monterde{@}uv.es}
\address{F. Ongay. CIMAT, Jalisco S/N, Valenciana, Gto., C.P.
36240, Mexico} \email{ongay@cimat.mx}

\subjclass[2010]{Primary 51M05. Secondary 14H50, 68U05 65D18}

\keywords{}

\date{January 30, 2013}

\begin{abstract}
We show that the distance trisector curve is not an algebraic
curve, as was conjectured in the founding paper by T. Asano, J.
Matousek and T. Tokoyama \cite{AMT}.
\end{abstract}

\maketitle

\section{Introduction}

In the paper \cite{AMT} (see also \cite{IK}) the curve called {\it
the distance trisector curve} was introduced. It is defined as
follows: Fix two points, say $P_1=(0,1)$ and $P_2=(0,-1)$; then
two curves, $C_1$, $C_2$, can be constructed such that they divide
the plane in three sectors---hence the designation of the curves
$C_1$, $C_2$ as ``trisectors,'' in such a way that the distance
between $C_1$ (the ``upper'' distance trisector curve) and $C_2$
(the ``lower'' curve) is the same as the distance from $C_2$ to
$P_2$; the curve $C_1$ then determines a connected region around
$P_1$ which can be thought of as the zone of influence of this
point, and similarly for $P_2$ and $C_2$, while the region between
the two curves is a kind of ``neutral zone,'' and the curves
$C_1$, $C_2$ are symmetric with respect to the $x$ axis.

Now, in the introduction of \cite{AMT} the authors conjecture that ``{\it \dots
the distance trisector curve is not
algebraic\dots}'' ---that is, that it cannot be expressed in the form
$P(x,y)=0$ for a polynomial $P\in{\mathbb R}[x,y]$, and this was probably the most important question left open in that work.

In this paper we show that this conjecture is true.  The sketch of our proof is as follows:

First, based on results in
\cite{AMT}, one knows that the distance trisector curve can be parametrized as
$(t,f(t))$ where $f(t)$ is an analytic function. The important point for us here is that
its Taylor expansion has coefficients in the quadratic field ${\mathbb Q}[\sqrt3]$, which in particular implies  that if the distance trisector curve were an
algebraic curve, expressed as $P(x,y)=0$, then
$P$ would belong to ${\mathbb Q}[\sqrt3][x,y]$.

The field ${\mathbb Q}[\sqrt3]$ has a natural involution: the conjugation map $\sqrt3\to -\sqrt3$ in ${\mathbb Q}[\sqrt3]$. Technically, this is the only non-trivial automorphism in the Galois group of the extension ${\mathbb Q}[\sqrt3]$ of ${\mathbb Q}$, although this is not essential to our proof; the point is  that from this another curve can be defined, which we call the {\it conjugate distance trisector
curve}, or simply, the {\it conjugate curve}. This new curve shares several of the algebro-geometric
properties of the distance trisector curve, but its geometric aspect is completely different as, roughly speaking,  the shape of the conjugate curve resembles that
of an Archimedean spiral.

But if the distance trisector curve were an algebraic curve
then the conjugate curve would also be, because it could be
expressed as $\overline{P}(x,y)=0$, where $\overline{P}\in {\mathbb Q}[\sqrt3][x,y]$ is the
conjugate polynomial. However, we can show that the
conjugate curve, like the Archimedean spiral,   has in fact an infinite number of crossings with the
axes. Therefore, it cannot be an algebraic curve.

\section{Some basic facts about the distance trisector curve}\label{section-envelope}

This section is  essentially an equivalent reformulation of some parts of \cite{AMT}.

\subsection{The distance trisector curve as an envelope curve...}
First of all, we observe that the distance trisector can be  seen as the envelope curve of
a family of circles: Indeed, if we
let $\alpha:I\to {\mathbb R}^2$ be a parameterization of the upper
distance trisector curve, and for each $t\in I$, we let $S_t$ be the
circle centered at $\alpha(t)$ and of radius $d(\alpha(t),(0,1))$, then the lower distance trisector curve is the envelope of the
family of circles $S_t$.
\medskip
\begin{center}
\includegraphics[width=10cm]{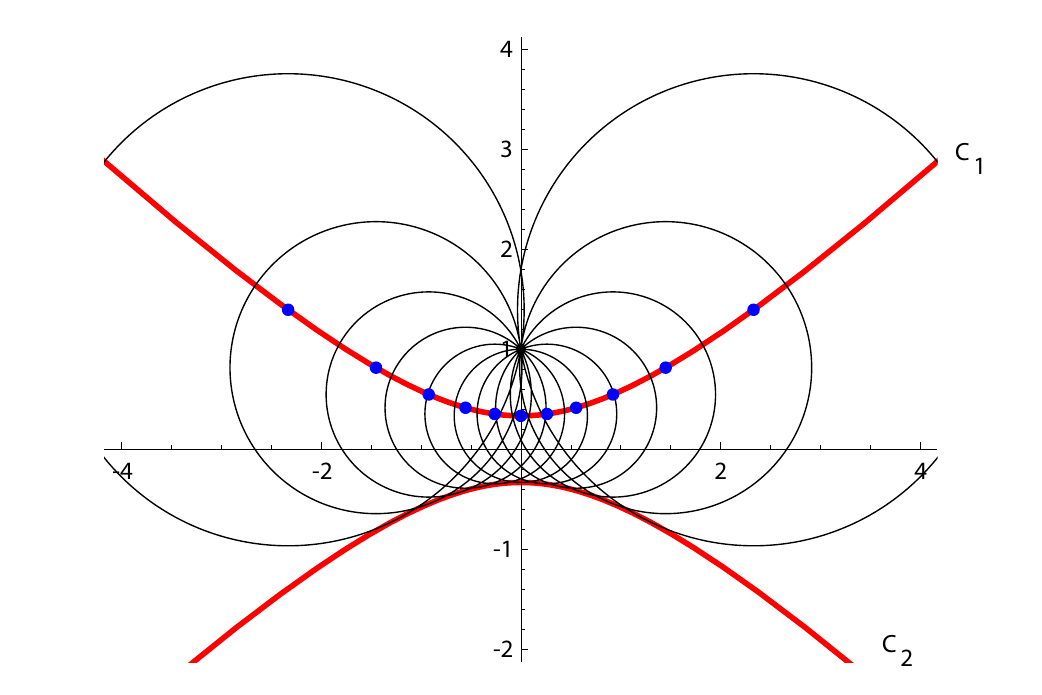}

Figure I. The two distance trisector curves and some of the circles $S_t$.
\end{center}
\medskip

For later use, let us next recall how to obtain in general the parametrization of the envelope curve of
a family of circles defined as before.

\begin{proposition}\label{prop-envelope}
Let $\alpha:I\to{\mathbb R}^2$, $I\subset {\mathbb R}$, be a
regular parametrized curve not passing through the point $(0,1)$.
For each $t\in I$, let $S_t$ be the circle centered at $\alpha(t)$
and of radius $d(\alpha(t),(0,1))$. The envelope curve of the
family of circles $\{S_t\}_{t\in I}$ can be parametrized by
$$\beta(t) = (0,1)+2\langle\alpha(t)-(0,1),\overrightarrow{\mathbf{n}}(t)\rangle\
\overrightarrow{\mathbf{n}}(t),$$ where
$\overrightarrow{\mathbf{n}}$ denotes the normal vector to the
curve $\alpha$.

Moreover, the tangent line to the envelope $\beta$ at $t_0$ only
depends on $\alpha(t_0)$ and on the tangent line to $\alpha$ at
$t_0$.
\end{proposition}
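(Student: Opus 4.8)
The plan is to use the classical characterization of the envelope of a one-parameter family of plane curves $\{x:F(x,t)=0\}$ as the locus of points satisfying $F(x,t)=0$ and $\partial_t F(x,t)=0$ simultaneously, and then to eliminate $t$ by hand. First I would record the defining function. Since the radius of $S_t$ equals $d(\alpha(t),(0,1))$, every circle $S_t$ passes through the point $(0,1)$, and $S_t=\{x\in\mathbb{R}^2:F(x,t)=0\}$ with
$$F(x,t)=\|x-\alpha(t)\|^2-\|\alpha(t)-(0,1)\|^2=\|x\|^2-2\langle x,\alpha(t)\rangle+2\langle\alpha(t),(0,1)\rangle-1 .$$
The hypothesis that $\alpha$ misses $(0,1)$ only serves to keep all of these circles nondegenerate.

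Next I would impose the two envelope conditions. One has $\partial_t F(x,t)=-2\langle x-(0,1),\alpha'(t)\rangle$, so $\partial_t F=0$ says exactly that $x-(0,1)$ is orthogonal to $\alpha'(t)$, i.e.\ a multiple of the unit normal $\overrightarrow{\mathbf{n}}(t)$; hence every point of the envelope lying on $S_t$ has the form $x=(0,1)+\lambda\,\overrightarrow{\mathbf{n}}(t)$ for some $\lambda\in\mathbb{R}$. Substituting this into $F(x,t)=0$, using $\|\overrightarrow{\mathbf{n}}(t)\|=1$, and observing that the term $\|\alpha(t)-(0,1)\|^2$ cancels, one is left with
$$\lambda^2-2\lambda\,\langle\alpha(t)-(0,1),\overrightarrow{\mathbf{n}}(t)\rangle=0 .$$
The root $\lambda=0$ is a spurious branch, corresponding to the point $(0,1)$ common to all the circles; the root $\lambda=2\langle\alpha(t)-(0,1),\overrightarrow{\mathbf{n}}(t)\rangle$ gives precisely the asserted parametrization $\beta(t)$.

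For the last assertion of the Proposition I would differentiate the identity $F(\beta(t),t)=0$ with respect to $t$. Using $\partial_t F(\beta(t),t)=0$ this gives $\langle\nabla_x F(\beta(t),t),\beta'(t)\rangle=0$, and since $\nabla_x F(x,t)=2(x-\alpha(t))$ we obtain $\langle\beta(t)-\alpha(t),\beta'(t)\rangle=0$. As $\beta(t)$ lies on the nondegenerate circle $S_t$, the radial vector $\beta(t)-\alpha(t)$ is nonzero, so at parameters where $\beta'(t_0)\neq 0$ the tangent line to $\beta$ at $t_0$ is the unique line through $\beta(t_0)$ perpendicular to $\beta(t_0)-\alpha(t_0)$, that is, the tangent line to $S_{t_0}$ at $\beta(t_0)$. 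But $S_{t_0}$ is determined by its centre $\alpha(t_0)$ and radius $d(\alpha(t_0),(0,1))$, hence by $\alpha(t_0)$ alone, while $\beta(t_0)$ is determined by $\alpha(t_0)$ together with $\overrightarrow{\mathbf{n}}(t_0)$, and $\overrightarrow{\mathbf{n}}(t_0)$ is determined up to sign (which affects neither $\beta(t_0)$ nor the line) by the tangent line to $\alpha$ at $t_0$. Hence the tangent line to $\beta$ at $t_0$ depends only on $\alpha(t_0)$ and on the tangent line to $\alpha$ at $t_0$.

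The calculations here are short, so the delicate points are conceptual. The first is to recognize and set aside the spurious root $\lambda=0$: since all the $S_t$ pass through $(0,1)$, the pair $F=0$, $\partial_t F=0$ is satisfied identically at $x=(0,1)$, and one must argue that the genuine envelope is the remaining branch $\beta$. The second, on which the last assertion rests, is to justify that $\beta$ actually remains tangent to each $S_t$ rather than merely meeting it; this is exactly what differentiating $F(\beta(t),t)=0$ provides. Finally, one should keep in mind the degenerate case $\beta'(t_0)=0$, where \emph{the} tangent line to $\beta$ must be understood with the usual conventions, a situation that does not arise for the distance trisector curve.
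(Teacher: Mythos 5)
Your proof is correct, and the first half is essentially the paper's argument: both impose the envelope system $F=0$, $F_t=0$ for the same quadratic $F$, and eliminate to land on $\beta$. You do it in coordinate-free vector form, which makes visible a point the paper passes over silently: the quadratic has the spurious root $\lambda=0$ (in the paper, the factor $1-y$ that is simply divided out), corresponding to the point $(0,1)$ common to all the circles, and you correctly identify and discard that branch. For the second assertion your route is genuinely different. The paper computes $\beta'$ explicitly in the arc-length parameter, obtaining $\beta'(s)=-2\kappa(s)\bigl(\langle\alpha(s)-(0,1),\overrightarrow{\mathbf{t}}(s)\rangle\overrightarrow{\mathbf{n}}(s)+\langle\alpha(s)-(0,1),\overrightarrow{\mathbf{n}}(s)\rangle\overrightarrow{\mathbf{t}}(s)\bigr)$, and reads off that the direction depends only on $\alpha(s)$, $\overrightarrow{\mathbf{t}}(s)$, $\overrightarrow{\mathbf{n}}(s)$; you instead differentiate $F(\beta(t),t)=0$ to get $\langle\beta(t)-\alpha(t),\beta'(t)\rangle=0$ and identify the tangent line to $\beta$ with the tangent line to the circle $S_{t_0}$ at $\beta(t_0)$. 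Your version is more conceptual and explains \emph{why} the envelope is tangent to each circle, and you rightly flag the degenerate case $\beta'(t_0)=0$ (which by the paper's formula occurs exactly where $\kappa=0$). The one thing the paper's explicit computation buys that yours does not is the closed formula for the unit tangent $\overrightarrow{\mathbf{t}}^\beta$, which is reused later (in the remark following the proposition and in the lemma on horizontal tangent lines), so in the context of the whole paper the explicit derivative is not wasted effort.
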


\begin{proof} Let us suppose that $\alpha(t) = (a(t),b(t))$,
then the implicit equation of the circle $S_t$ is
\begin{equation}\label{eq-circle}
F(x,y,t):=-1+x^2+y^2-2 x a(t)-2 (-1+y) b(t) = 0.
\end{equation}
Thus, if we solve the system of equations $$ \begin{cases}
F(x,y,t) &= -1+x^2+y^2-2 x a(t)-2 (-1+y) b(t)=0,\\ F_t(x,y,t) &=
-2 x a'(t)-2 (-1+y) b'(t) = 0, \end{cases}$$ in the variables $x,y$
we obtain a parameterization of the envelope of the family of
circles.

Indeed, from the second equation we have that $$\frac{x}{1-y} =
\frac{b'(t)}{a'(t)}.$$ Substituting now $ x
=(1-y)\frac{b'(t)}{a'(t)}$ into the first equation we get a
quadratic equation for $y$
$$ \frac{1-y}{(a'(t))^2}\left(-\left((a'(t))^2+(b'(t))^2\right) y-(a'(t))^2+(b'(t))^2-2a'(t)b'(t) a(t)+2  b(t)(a'(t))^2\right) = 0,$$
and it follows that
$$\begin{array}{rcl}
\beta(t) &=& \frac{1}{a'(t)^2+b'(t)^2} \left( 2 b'(t) ((1-b(t))
a'(t)+a(t) b'(t)),(2 b(t)-1) a'(t)^2-2 a(t) a'(t)
b'(t)+b'(t)^2\right)\\[3mm]&=& \frac{1}{||\alpha'(t)||^2}
\left( 2((b(t)-1)a'(t)-a(t)
b'(t))(-b(t),a(t))+(0,||\alpha'(t)||^2)\right)\\[3mm]
&=&
2\langle(a(t),b(t)-1),\frac{(-b(t),a(t))}{||\alpha'(t)||}\rangle\frac{(-b(t),a(t))}{||\alpha'(t)||}+(0,1)\\[3mm]
&=&(0,1)+2\langle\alpha(t)-(0,1),\overrightarrow{\mathbf{n}}(t)\rangle\
\overrightarrow{\mathbf{n}}(t),
\end{array}$$
as stated.
\bigskip

For the second assertion, we simply compute the tangent vector to
the curve $\beta$, and the last expression shows that this is
independent of the parameterization  of the initial curve
$\alpha$; to simplify matters, it is better to work with the
arc-length parameterization; then, a simple computation  shows
that
$$\beta'(s)=-2\kappa(s)\left(\langle\alpha(s)-(0,1),\overrightarrow{\mathbf{t}}(s)\rangle
\overrightarrow{\mathbf{n}}(s)+\langle\alpha(s)-(0,1),\overrightarrow{\mathbf{n}}(s)\rangle
\overrightarrow{\mathbf{t}}(s) \right),$$ where
$\overrightarrow{\mathbf{t}}$ denotes the tangent vector to the
curve $\alpha$ and $\kappa$ its curvature function.
\end{proof}

\begin{remark}\label{remark-tangent-beta}
Later on we will need  the unit  tangent vector to the curve
$\beta$. Therefore, let us compute first $||\beta'||$ {\rm :}
$$\begin{array}{rcl}
||\beta'(s)||^2 &=&
4\kappa^2(s)\left(\langle\alpha(s)-(0,1),\overrightarrow{\mathbf{t}}(s)\rangle^2+
\langle\alpha(s)-(0,1),\overrightarrow{\mathbf{n}}(s)\rangle^2\right)\\[3mm]
&=& 4\kappa^2(s)||\alpha(s)-(0,1)||^2. \end{array}$$ Thus,
$$\overrightarrow{\mathbf{t}}^\beta(s) = -\langle\frac{\alpha(s)-(0,1)}{||\alpha(s)-(0,1)||},\overrightarrow{\mathbf{t}}(s)\rangle
\overrightarrow{\mathbf{n}}(s)-\langle\frac{\alpha(s)-(0,1)}{||\alpha(s)-(0,1)||},\overrightarrow{\mathbf{n}}(s)\rangle
\overrightarrow{\mathbf{t}}(s).$$
\end{remark}

\subsection{Comparison with Lemma 8 in Assano {\it et al.}} The important point about stating the previous proposition  is that it leads to consider a map $\Theta$ that, given a parametrized curve
$\alpha(t) =(a(t),b(t))$, transforms it to another parametrized
curve
$$\Theta(\alpha)(t)  =
2\langle\alpha(t)-(0,1),J\left(\frac{\alpha'(t)}{||\alpha'(t)||}\right)\rangle\
(T\circ J)\left(\frac{\alpha'(t)}{||\alpha'(t)||}\right)-(0,1),$$
 where $T$ is the symmetry
$T(x,y) = (x,-y)$ and $J$ is the rotation $J(x,y) = (-y,x)$.
It is actually plain  that, by definition, the distance trisector curve is characterized by the
following property: it is a curve such that if $\alpha(t)$
is a local parametrization then $\Theta(\alpha)(t)$ is another local
parametrization of the same curve; that is
$$\Theta(\alpha)(t) =\alpha(g(t)),\qquad \forall t\in{\mathbb R},$$
with $g:{\mathbb R}\to {\mathbb R}$ the reparametrization.

To facilitate comparisons with \cite{AMT}, we now observe that in that work the trisector curve is described as the graph of a function $f$ defined on the whole real line ${\mathbb R}$,
$\alpha(x) = (x,f(x))$. In other words, the parameter $t$ is chosen to be $x$, and the coordinate functions $a(t)\to x$ and $b(t)\to f(x)$; according to the previous paragraph, this can be written as
$$\Theta(\alpha)(x)=(t(x),f(t(x))),$$
where the reparametrization $g$ has been denoted by $t$.

Let us next recall the following basic result from \cite{AMT}:

\begin{lemma} (Lemma 8 of \cite{AMT}) The following equations are satisfied for every $x\in{\mathbb R}$:
\begin{equation}\label{eq-lemma-8}
\begin{array}{rcl}
(t(x)-x)^2+(f(t(x))+f(x))^2-x^2-(f(x)-1)^2&=&0,\qquad{\rm and}\\[3mm]
t(x)-x+\left(f(x)+f(t(x))\right)f'(t(x)) &=&0,
\end{array}
\end{equation}
where $f'(t(x))$ is the derivative of $f$ evaluated at $t(x)$.
\end{lemma}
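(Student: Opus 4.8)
The plan is to read off both identities from the envelope description established above. The first thing to record is the identity $\Theta(\alpha)=T\circ\beta$, where $\beta$ is the envelope of the circles $\{S_t\}$ attached to $\alpha$: this is immediate from the formula for $\Theta$, since $J$ sends the unit tangent $\alpha'/\|\alpha'\|$ to the unit normal $\overrightarrow{\mathbf n}$, $(T\circ J)(\alpha'/\|\alpha'\|)=T(\overrightarrow{\mathbf n})$, and $-(0,1)=T((0,1))$, so comparison with Proposition \ref{prop-envelope} gives the claim. Taking $\alpha(x)=(x,f(x))$ and using the defining property $\Theta(\alpha)(x)=\alpha(t(x))=(t(x),f(t(x)))$, we then obtain the key identification
$$\beta(x)=T\bigl(\Theta(\alpha)(x)\bigr)=\bigl(t(x),-f(t(x))\bigr).$$
Thus $\beta$ traces the lower trisector curve $C_2$, but through the reparametrization $x\mapsto t(x)$, whereas $C_2$ also carries the obvious graph parametrization $s\mapsto(s,-f(s))$; keeping these two parametrizations (and the reflection $T$) straight is really the only thing that requires attention.

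For the first identity of \eqref{eq-lemma-8}, I use that, by construction, $\beta(x)$ lies on the circle $S_x$, which is centered at $\alpha(x)=(x,f(x))$ and has radius $d(\alpha(x),(0,1))$ (this is just the first equation of the envelope system in the proof of Proposition \ref{prop-envelope}). Writing $|\beta(x)-\alpha(x)|^2=d(\alpha(x),(0,1))^2$ with $\beta(x)=(t(x),-f(t(x)))$ gives
$$(t(x)-x)^2+(f(t(x))+f(x))^2=x^2+(f(x)-1)^2,$$
which rearranges to the stated equation. For the second identity, I use that the envelope $\beta$ is tangent to the member circle $S_x$ at the characteristic point $\beta(x)$ — this is exactly what the formula for $\overrightarrow{\mathbf t}^\beta$ in Remark \ref{remark-tangent-beta} encodes, a short computation giving $\langle\overrightarrow{\mathbf t}^\beta,\beta-\alpha\rangle=0$. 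Hence the center $\alpha(x)$ of $S_x$ lies on the normal line to $C_2$ at $\beta(x)$, i.e. $\alpha(x)-\beta(x)$ is orthogonal to the tangent direction of $C_2$ at that point; in the graph parametrization the relevant parameter value is $s=t(x)$ and the tangent direction is $(1,-f'(t(x)))$, so, since $\alpha(x)-\beta(x)=(x-t(x),\,f(x)+f(t(x)))$, the condition $\langle\alpha(x)-\beta(x),(1,-f'(t(x)))\rangle=0$ reads
$$t(x)-x+\bigl(f(x)+f(t(x))\bigr)f'(t(x))=0,$$
which is the second equation of \eqref{eq-lemma-8}.

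I do not anticipate a genuine obstacle: given Proposition \ref{prop-envelope} and Remark \ref{remark-tangent-beta}, the whole argument comes down to the two substitutions above, and the only mildly delicate point is that writing the tangent direction of $C_2$ at $\beta(x)$ as $(1,-f'(t(x)))$ presupposes $t'(x)\neq0$; at the isolated points where $t'(x)=0$ the second identity then follows by continuity, or directly from real-analyticity of $f$ and $t$. If one prefers to avoid the tangency argument altogether, one can instead differentiate the (already-proved) first identity with respect to $x$ and simplify using the relation $t(x)=\bigl(1+f(t(x))\bigr)f'(x)$, which drops straight out of the explicit formula for $\Theta(\alpha)(x)$; this route meets exactly the same bookkeeping and nothing harder.
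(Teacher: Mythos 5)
Your proof is correct and follows essentially the same route as the paper's own justification (given in the remark following the lemma): the first equation is exactly the statement that $(t(x),-f(t(x)))$ lies on the circle $S_x$, i.e.\ $F(t(x),-f(t(x)),x)=0$, and the second is the first-order orthogonality condition at that point. The only cosmetic difference is that you obtain the orthogonality from the envelope tangency $\langle\overrightarrow{\mathbf{t}}^\beta,\beta-\alpha\rangle=0$ of Remark \ref{remark-tangent-beta}, whereas the paper phrases it as the vanishing derivative of the squared distance $(u-x)^2+(f(x)+f(u))^2$ at $u=t(x)$; these are the same computation.
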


\begin{remark}
The first equation in (\ref{eq-lemma-8}) is just $F(t(x),-f(t(x)),x)=0$ (see Eq. (\ref{eq-circle}) for the definition of $F$).
The second equation in (\ref{eq-lemma-8}) comes from the fact that for a fixed $x$, the point $(t(x),-f(t(x)))$ minimizes the squared distance of $(x,f(x))$ to
$(u,-f(u))$ among all $u$. Therefore,
$$0=\frac12\frac{\partial\ }{\partial u}|_{u=t(x)}\left((u-x)^2+(f(x)+f(u))^2 \right) =t(x)-x+(f(x)+f(t(x)))f'(t(x)).$$
By the way, it is perhaps worthwhile of notice that in \cite{AMT} this equation appears without two parentheses. We believe this to be a misprint, later corrected in Eq. (4) of the paper.
\end{remark}

\section{The conjugate distance trisector curve}

In \cite{AMT} the authors are looking for a {\bf convex}
curve, because intuitivelly this is the shape of the
distance trisector curve in a neighborhood of its initial point $(0,1/3)$.
Nevertheless, the convexity hypothesis is not essential for the algebraic manipulations, and can be suppressed.
Somewhat surprisingly, another curve appears
sharing with the distance trisector curve many of its properties; we have called this new curve the conjugate distance trisector curve, or for brevity, the {\it conjugate\/} curve.
Let us now elaborate on this point:

\subsection{Power series for $f$ and $t$ near the
origin}\label{series}

Along the proof on Lemma 10 in \cite{AMT}, which is the technical result needed to compute the Taylor series expansion of the distance trisector curve,  the authors arrive to a
point where a solution for the equations
\begin{equation}\label{equation-lambda}
\lambda-1+\frac43 \lambda q_2=0,\qquad \lambda^2-2\lambda+\frac43 (\lambda^2+2) q_2=0,
\end{equation}
has to be found; here $\lambda$ and $q_2$ are the unknowns.
Now, equations (\ref{equation-lambda}) are easily seen to have the two different sets of solutions:
\begin{equation}\label{two-solutions}
\begin{cases}
\lambda &=+\sqrt3-1,\qquad q_2 =\frac38(+\sqrt3-1),\\[4mm]
\lambda &= -\sqrt3-1,\qquad q_2 = \frac38(-\sqrt3-1),
\end{cases}
\end{equation}
and notice that the only difference between them is the sign of
$\sqrt3$.

In any case, choosing one of these solutions, the rest of the coefficients in the
Taylor series expansion can be obtained recursively, as solutions
of linear systems where all the coefficients belongs to the field
${\mathbb Q}[\sqrt3]$. In particular, all the coefficients in the
Taylor expansion belong to this field.
The choice made in \cite{AMT}  is then the first set of solutions, which gives raise to a convex curve.

But the second set of solutions is of course also possible, and this gives raise to a concave solution, which in a sense is a conjugation of the previous curve.
In other words, its Taylor
expansion is essentially  the same as the Taylor expansion of the distance
trisector curve, but with $\sqrt3$ replaced by $-\sqrt3$. Therefore the proof of the convergence of the new series is completely analogous to the
existing one. (For instance, in this case the determinant $d_k$ of the matrix of
coefficients of the linear system is always negative, and the
maximum value is $d_4 = -497.415$, so the minimum absolute value
is $497.415$.) The first few terms in the Taylor expansions for the new solution are in fact
$$f(x) = \frac{1}{3}-\frac{3}{8} \left(1+\sqrt{3}\right)x^2-\frac{27}{704} \left(13+7 \sqrt{3}\right)x^4+O(x^6),$$
$$t(x) = -(1+\sqrt{3})x+\frac{27}{88} \left(17+10 \sqrt{3}\right)x^3+O(x^5).$$

\subsection{Extending to all of ${\mathbb R}$}\label{extending-the-curve}
The main difference between the distance
trisector curve and its conjugate curve  is that the latter is not the graph of a function, as this curve has self-intersections.

To justify this, we notice that although in principle the functions $f$ and $t$ for the conjugate curve are known to be analytic only on some neighborhood of $0$, in fact Lemma 11 in \cite{AMT} can again be used to extend this
neighborhood iteratively.

These assertions can be nicely illustrated using an approximation to the conjugate curve ---obtained form Proposition \ref{prop-envelope}--- as follows: take $\alpha_0(t)=\left(t,\frac13-t^2\right)$, $t\in[-1,1]$, and  define recursively $\alpha_{i+1}=\Theta(\alpha_i)$ for $i>0$. It turns out that $\alpha_5(t)$  already gives a very good approximation in an extended interval to the conjugate curve, and as shown in Fig. II, it has a shape resembling an Archimedean spiral.

\begin{center}
\includegraphics[width=7.5cm]{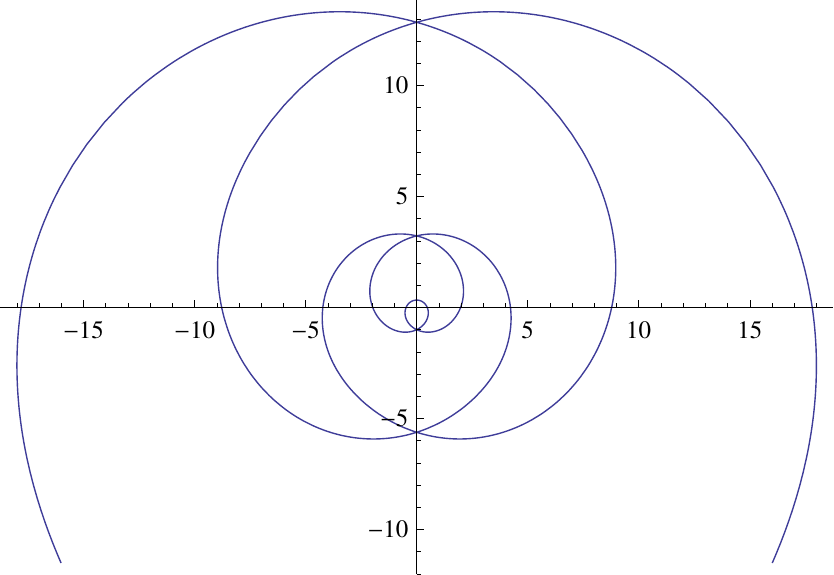}

Figure II. The conjugate distance trisector curve.
\end{center}

\section{Some properties of the conjugate distance trisector curve.}

\subsection{The reflected curve} As  said in Section \ref{section-envelope}, the lower distance
trisector curve can be seen as the
envelope curve of a family of circles centered at points of the upper curve.
 Due
to the fact that it has been defined through Eqs.
(\ref{eq-lemma-8}), the same happens to the conjugate distance trisector curve
 (see Fig. III).

\medskip

\begin{center}
\includegraphics[width=8cm]{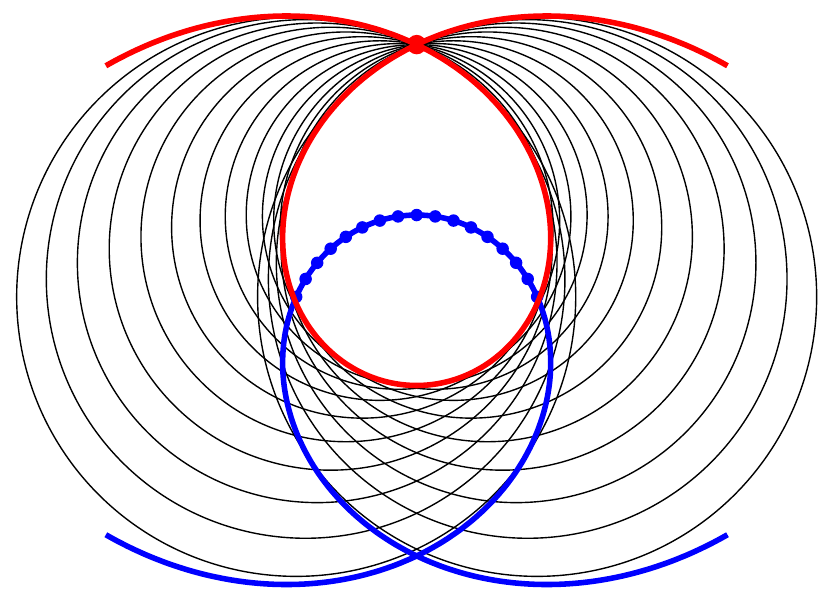}

Figure III. The conjugate distance trisector curve (blue), its reflected
curve (red) and some of the circles $S_t$.
\end{center}

In this case it makes little sense to speak of ``upper'' and ``lower'' curves, and thus we will rather refer to them as the conjugate  curve and its reflection
(with respect to the $x$-axis).

This result also shows that for any point $\alpha(t)$ of the conjugate
distance trisector curve there is indeed a point $\beta(t)=T(\Theta(\alpha)(t))$ of the reflected
curve such that the distance between $\alpha(t)$ and $\beta(t)$ is the same as the distance between $\alpha(t)$ and
$p=(0,1)$.

Moreover, this brings up another important difference between the two curves; namely,  that if $\alpha(t_0)$ is a point of the conjugate curve and $\beta(t_0)=\alpha(g(t_0))$ is its corresponding point on the reflected curve, then we can only assure that this point {\it locally\/} minimizes the squared distance from $\alpha(t_0)$ to $\beta(t)$.

As an explicit example, take $t_0=\frac1{32}$, so that $\alpha_5(t_0) = (0.92795, 2.82373)$ and $\Theta(\alpha_5)(t_0) = (2.2336, -4.39928) =\alpha_5(-0.0858323)$.

\begin{center}
\includegraphics[width=10cm]{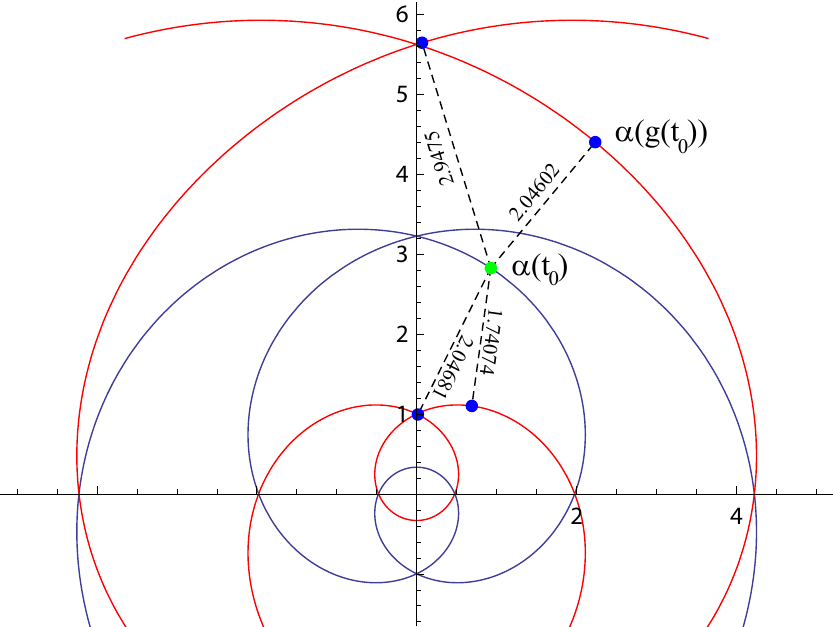}

Figure IV. Visualization of the first four local minima of the distance function $||\Theta(\alpha_5)(\frac1{32})-\alpha_5(t)||$.
\end{center}

The function $||(2.2336, 4.39928)-\alpha_5(t)||^2$ then has a local minimum ($4.18942$) at $t=-0.0858323$, but it has a global minimum  ($3.03018$) at $t=0.0134386$ (see Fig. IV).

\begin{remark}  As can be seen from Fig. IV, it is only the point marked $\alpha(g(t_0))$ that satisfies the property that its distance to the conjugate curve is the same as the distance from this curve to the point $(0,1)$, and thus this is the point that belongs to the circle $S_{t_0}$ of the envelope construction.

The slight discrepancies between the distances (less than $1/1000$) reflect the fact that $\alpha_5$ is only an approximation to the conjugate curve, but also show that it is indeed a very good one.
\end{remark}


Another property of the distance trisector curve, of perhaps still more interest here, is the following
easy consequence of Proposition \ref{prop-envelope} (see also \cite{AMT}).

\begin{corollary}\label{coro-envelope}
If $\alpha$ is a parametrization of the distance trisector curve,
then the segment joining the point $(0,1)$ with
$T(\Theta(\alpha)(t))$ is parallel to the normal vector to $\alpha$
at $\alpha(t)$.
\end{corollary}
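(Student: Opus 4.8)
The plan is to derive the statement directly from Proposition~\ref{prop-envelope}, since the envelope-circle construction there is exactly the geometric content we need. Recall that Proposition~\ref{prop-envelope} gives the envelope $\beta$ of the family of circles $S_t$ centered at $\alpha(t)$ with radius $d(\alpha(t),(0,1))$ as
$$\beta(t) = (0,1)+2\langle\alpha(t)-(0,1),\overrightarrow{\mathbf{n}}(t)\rangle\ \overrightarrow{\mathbf{n}}(t),$$
and that by the discussion in Section~\ref{section-envelope} this envelope, after reflection through the $x$-axis, is precisely $T(\Theta(\alpha)(t))$; equivalently $\beta(t)=T(\Theta(\alpha)(t))$ up to the sign conventions fixed there. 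So the point $T(\Theta(\alpha)(t))$ is the point of the circle $S_t$ where the envelope is tangent to $S_t$.

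First I would make explicit that $\beta(t)$ lies on the circle $S_t$: this is immediate from the formula, since $\beta(t)-(0,1)$ is a vector of the form $2\langle \alpha(t)-(0,1),\mathbf{n}\rangle\,\mathbf{n}$, so that $\beta(t)-\alpha(t) = 2\langle\alpha(t)-(0,1),\mathbf{n}\rangle\,\mathbf{n} - (\alpha(t)-(0,1))$, which is the reflection of $(0,1)-\alpha(t)$ across the line spanned by $\mathbf{n}(t)$, hence has the same length, namely the radius of $S_t$. Second, and this is the crux, I would observe that $\beta(t)$ is the reflection of the point $(0,1)$ across the normal line to $\alpha$ at $\alpha(t)$ — that is, across the line through $\alpha(t)$ in the direction $\overrightarrow{\mathbf{n}}(t)$. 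Indeed, writing $v = (0,1)-\alpha(t)$, its reflection across that line is $\alpha(t) + (2\langle v,\mathbf{n}\rangle \mathbf{n} - v) = (0,1) + 2\langle v,\mathbf{n}\rangle\mathbf{n} + 2(\alpha(t)-(0,1))\cdot(\text{terms})$... more cleanly: the reflection of $(0,1)$ across that line equals $(0,1) + 2\langle \alpha(t)-(0,1),\mathbf{n}(t)\rangle\mathbf{n}(t)$ after a short computation, which is exactly $\beta(t)$. Since a point and its mirror image across a line are joined by a segment perpendicular to that line, the segment from $(0,1)$ to $\beta(t)=T(\Theta(\alpha)(t))$ is perpendicular to the normal line — wait, that gives perpendicular, not parallel, so I must be careful about which line is the mirror.

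Let me restate the correct computation, which is the step I expect to be the only real subtlety: $\beta(t)$ is the reflection of $(0,1)$ across the \emph{tangent} line to $\alpha$ at $\alpha(t)$ is \emph{not} right either; rather, decomposing $\alpha(t)-(0,1) = \langle\cdot,\mathbf{t}\rangle\mathbf{t} + \langle\cdot,\mathbf{n}\rangle\mathbf{n}$, the formula keeps the normal component with a doubled sign and drops... Actually the clean statement is: $\beta(t)-(0,1)$ is parallel to $\mathbf{n}(t)$ by inspection of the formula, \emph{provided} $\langle\alpha(t)-(0,1),\mathbf{n}(t)\rangle\neq 0$; and that inner product is nonzero because $\alpha$ does not pass through $(0,1)$ and, for the trisector, $(0,1)$ is not on the tangent line (this is the ``moreover'' clause content, or can be checked from Lemma~8). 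Hence the segment from $(0,1)$ to $\beta(t)=T(\Theta(\alpha)(t))$ is a nonzero scalar multiple of $\mathbf{n}(t)$, i.e.\ parallel to the normal vector to $\alpha$ at $\alpha(t)$, which is exactly the assertion of Corollary~\ref{coro-envelope}. The main obstacle is thus purely bookkeeping: pinning down the precise relation $\beta(t)=T(\Theta(\alpha)(t))$ (with the right reflection $T$ and the sign of $\mathbf{n}$) so that the clean one-line observation ``$\beta(t)-(0,1)\parallel\mathbf{n}(t)$'' transfers verbatim to $T(\Theta(\alpha)(t))$; once that dictionary is in place, the proof is a single line reading off the displayed formula for $\beta$.
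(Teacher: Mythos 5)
Your final argument is correct and is exactly the paper's (unwritten) one: the corollary is read off directly from the displayed formula in Proposition~\ref{prop-envelope}, since $T(\Theta(\alpha)(t))=\beta(t)$ and $\beta(t)-(0,1)=2\langle\alpha(t)-(0,1),\overrightarrow{\mathbf{n}}(t)\rangle\,\overrightarrow{\mathbf{n}}(t)$ is manifestly a scalar multiple of $\overrightarrow{\mathbf{n}}(t)$. (Minor aside: the reflection picture you discarded is actually the right one --- $\beta(t)$ \emph{is} the mirror image of $(0,1)$ across the \emph{tangent} line at $\alpha(t)$, consistent with the paper's remark that this tangent bisects the angle $\widehat{P\alpha(t)\beta(t)}$ --- but this does not affect the validity of the argument you settle on.)
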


A graphical interpretation of the statement of Corollary
\ref{coro-envelope} is given in Fig. V, and it is then clear that the same property is still valid for the conjugate curve (see Fig. VI).

\begin{center}
\includegraphics[width=8 cm]{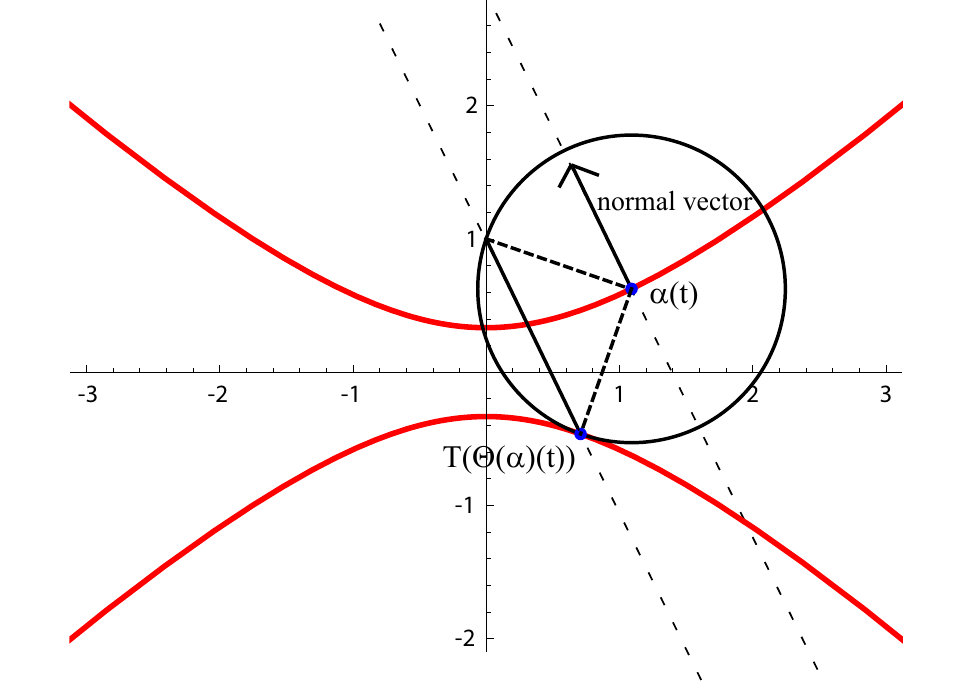}

Figure V.  A graphical interpretation of the statement of Corollary
\ref{coro-envelope}.
\end{center}
\medskip

\begin{center}
\includegraphics[width=8cm]{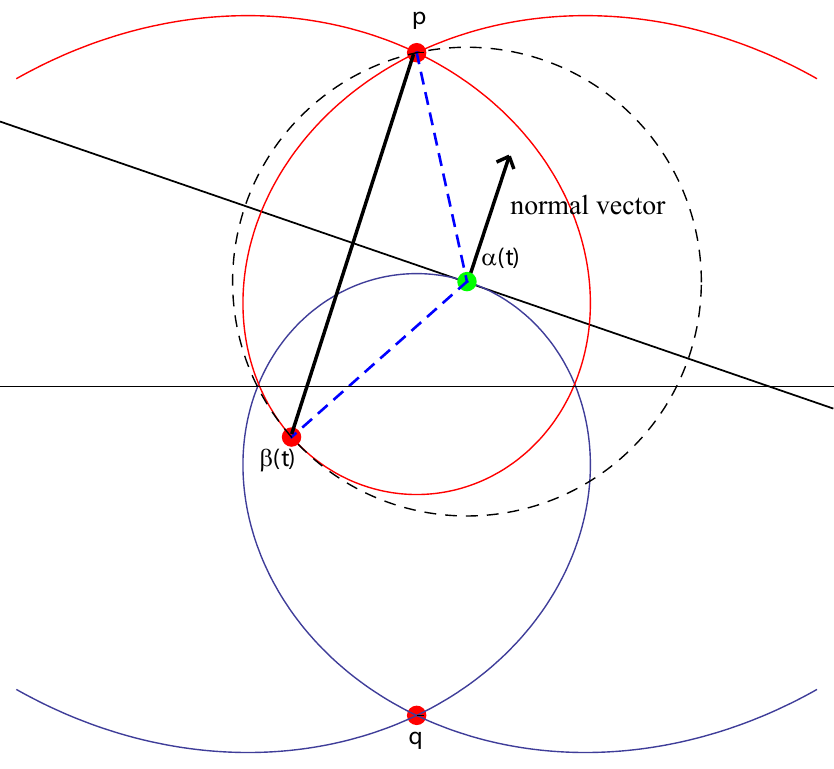}

Figure VI. The conjugate distance trisector curve (blue) and its horizontal
reflection (red) also satisfy Corollary \ref{coro-envelope}.
\end{center}

An  important point to stress here is that the tangent line at
$\alpha(t)$ is the bisectrix of the angle
$\widehat{P\alpha(t)\beta(t)}$.

\subsection{The conjugate curve has self-intersections} Let us now  prove the existence of the crossings in  the conjugate curve:

\begin{proposition}The conjugate distance trisector curve passes through $(0,-1)$. Therefore it has a self-intersection at this point.
\end{proposition}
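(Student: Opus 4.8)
The plan is to recast the claim through the envelope description (Proposition~\ref{prop-envelope} and Corollary~\ref{coro-envelope}) and then feed in the spiral shape of the curve from Section~\ref{extending-the-curve}.

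Write $P_1=(0,1)$, $P_2=(0,-1)=T(P_1)$, and let $\alpha$ be a parametrization of the conjugate curve $\Gamma$. First I would record the following reinterpretation of $\Theta$. Because $\Gamma$ satisfies Eqs.~(\ref{eq-lemma-8}), the point $T(\Theta(\alpha)(t))$ lies on the circle $S_t$, i.e.\ $\alpha(t)$ is equidistant from $P_1$ and from $T(\Theta(\alpha)(t))$; and by Corollary~\ref{coro-envelope} the segment $P_1\,T(\Theta(\alpha)(t))$ is parallel to the normal to $\Gamma$ at $\alpha(t)$. Hence the tangent line $\ell_t$ to $\Gamma$ at $\alpha(t)$ is exactly the perpendicular bisector of that segment, so $T(\Theta(\alpha)(t))$ is the mirror image of $P_1$ in $\ell_t$. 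Reflecting in the $x$-axis and using $P_2=T(P_1)$ gives
\[
{\rm dist}(P_1,\ell_t)\ =\ \frac12\,\bigl\|\,\Theta(\alpha)(t)-P_2\,\bigr\|.
\]
Since $\Theta(\alpha)$ is a reparametrization of $\alpha$, the point $\Theta(\alpha)(t)$ always lies on $\Gamma$; therefore it suffices to find a single $t_0$ for which $\ell_{t_0}$ passes through $P_1$, for then $P_2=\Theta(\alpha)(t_0)\in\Gamma$.

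To produce such a $t_0$, consider the continuous function $\rho(t)=\langle P_1-\alpha(t),\mathbf n(t)\rangle$, the signed distance from $P_1$ to $\ell_t$. At $t=0$ the tangent is horizontal and $|\rho(0)|=2/3$, so it is enough to check that $\rho$ changes sign as one moves along $\Gamma$; then $\rho(t_0)=0$ for some $t_0$. Here the global behaviour established in Section~\ref{extending-the-curve} is used: the extended conjugate curve (obtained through Lemma~11 of \cite{AMT}, and approximated by the iterates $\alpha_i=\Theta(\alpha_{i-1})$) winds around and produces a loop whose interior does not contain $P_1$ --- indeed the arc of $\Gamma$ issuing from $(0,1/3)$, running downward and returning toward the $y$-axis, stays below the level $y=1$ --- so $P_1$ lies on a common tangent line of that loop and $\rho$ necessarily takes both signs. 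I expect \emph{this} to be the one delicate point: turning ``the initial loop does not enclose $P_1$'' into a rigorous statement, for which the natural tool is the explicit contraction estimates underlying the convergence of the $\alpha_i$ (the values and the bound on the $d_k$ quoted in Sections~\ref{series} and~\ref{extending-the-curve}), certifying an arc of $\Gamma$ on which $\rho$ is seen to change sign; everything else above is essentially formal.

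Finally, the self-intersection is automatic from symmetry. Writing $\sigma(x,y)=(-x,y)$, a short computation with the formula for $\Theta$ (using $J\sigma=-\sigma J$ and $T\sigma=\sigma T=-{\rm Id}$) shows $\Theta$ is $\sigma$-equivariant; since the germ of $\Gamma$ at $(0,1/3)$ is the graph of the even function $f$ of Section~\ref{series}, it is $\sigma$-invariant, hence so is $\Gamma$ by analytic continuation. Thus $\sigma\circ\alpha=\alpha\circ h$ for a diffeomorphism $h$ of the parameter interval; differentiating at $0$, where $\alpha'$ is horizontal, gives $h'(0)=-1$, so $h$ is a decreasing involution and $0$ is its only fixed point. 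Consequently, if $\alpha(t_1)$ lies on the $y$-axis then $\alpha(h(t_1))=\sigma(\alpha(t_1))=\alpha(t_1)$, and either $h(t_1)=t_1$, which forces $t_1=0$ and $\alpha(t_1)=(0,1/3)$, or $h(t_1)\neq t_1$ and $\alpha$ is not injective at that point. Since $P_2=(0,-1)\neq(0,1/3)$, this shows $\Gamma$ has a self-intersection at $P_2$.
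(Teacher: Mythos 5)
Your proposal is correct and follows essentially the same route as the paper: both reduce the claim to finding a parameter where the tangent line passes through $(0,1)$ (so that $T(\Theta(\alpha)(t))$, the mirror image of $(0,1)$ in that tangent line, equals $(0,1)$ and hence $\Theta(\alpha)(t)=(0,-1)$ lies on the curve), and both get that parameter from an intermediate-value argument on the signed distance/angle from $(0,1)$ to the tangent line; the ``delicate point'' you flag is exactly the step the paper settles by exhibiting (numerically) the first vertical-tangent point near $(0.524,-0.244)$, which forces the sign change between the horizontal tangent at $(0,1/3)$ and that vertical tangent. Your involution argument for the self-intersection is a more carefully worked-out version of the paper's one-line appeal to the $y$-axis symmetry of the curve.
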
\label{cruce}

\begin{proof} It is clear that  at the point $(0,\frac13)$ the tangent line to the
conjugate distance trisector curve is horizontal. However, and in contrast to the trisector curve, it is not difficult to see,
even for the local Taylor series expansion, that there is a point where
the tangent line is vertical. Indeed, numerical computations show that the first point where the tangent
line is vertical is approximately $(0.524251, -0.243883)$.

Now, as the tangent line runs, either to the right or to the left,
from the horizontal position to the vertical position, there is a point where it passes
through the point $(0,1)$.

\begin{center}
\includegraphics[width=7cm]{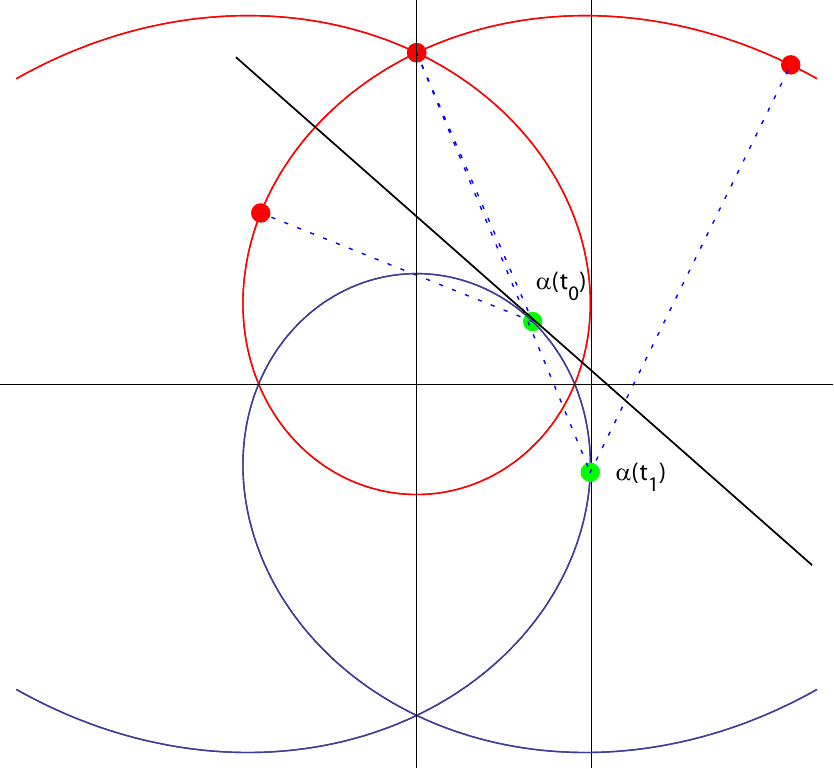}

Figure VII. When the tangent line at $\alpha(t_0)$ moves to the tangent line
at $\alpha(t_1)$, there is a point where it passes through the
point $(0,1)$.
\end{center}

 Such a point exists because if we asign, say, a positive value to the angle  $\widehat{P\alpha(t)\beta(t)}$ at a point such as $\alpha(t_0)$, then the angle at $\alpha(t_1)$ is negative and therefore at some point it has the value $0$ (see Fig. VII).
\bigskip

\begin{center}
\includegraphics[width=6cm]{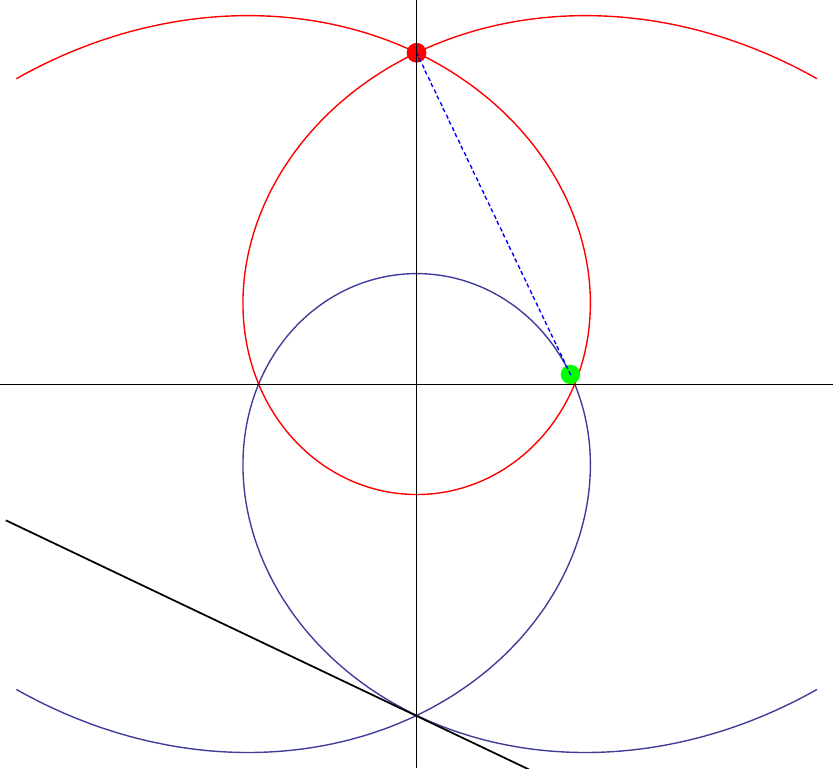}

Figure VIII. The tangent line at $(0.464045, 0.0289289)$ passes through  $(0,1)$.
\end{center}
\medskip

But if the reflected curve passes through $(0,1)$, the
conjugate curve passes through $(0,-1)$. Since the curve is symmetric with respect to $y$ axis, this point is a self-intersection of the curve.
\end{proof}

\begin{remark}\label{tangent-at-(0,-1)}
Later on we will need a tangent vector at $(0,-1)$. A  numerical approximation shows that one of  the tangents at this point is generated by the unit vector $(0.902272, -0.431168)$ (see Fig. VIII).
\end{remark}

\subsection{Horizontal tangent lines of
the conjugate  curve}

More generally, horizontal tangent lines are associated to
crossings of the conjugate  curve with the $y$-axis:
\begin{lemma}\label{lema-horizontal}
Suppose $\alpha(t_0)$ is a point on the conjugate distance trisector
curve with horizontal tangent line; then $T(\beta(t_0))$ is a
point on the $y$-axis, whose tangent line is parallel to
$J(\alpha(t_0)-(0,1))$.
\end{lemma}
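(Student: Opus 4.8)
The plan is to reduce everything to an explicit computation at $t_0$ using the two formulas already in hand: the envelope parametrization of Proposition~\ref{prop-envelope} and the unit tangent of Remark~\ref{remark-tangent-beta}. First I would normalize the situation. Since $\beta(t_0)$ and its tangent line depend only on $\alpha(t_0)$ and on the tangent line to $\alpha$ at $t_0$ (the ``moreover'' part of Proposition~\ref{prop-envelope}), I may pass to the arc-length parametrization and choose the orientation so that $\overrightarrow{\mathbf{t}}(t_0)=(1,0)$; the hypothesis that the tangent at $\alpha(t_0)$ is horizontal then forces $\overrightarrow{\mathbf{n}}(t_0)=J(1,0)=(0,1)$. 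Writing $\alpha(t_0)=(a_0,b_0)$, so that $\alpha(t_0)-(0,1)=(a_0,b_0-1)$, everything below is substitution.

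For the first assertion, plugging into the envelope formula gives
$$\beta(t_0)=(0,1)+2\langle (a_0,b_0-1),(0,1)\rangle\,(0,1)=(0,1)+2(b_0-1)(0,1)=(0,2b_0-1),$$
hence $T(\beta(t_0))=(0,1-2b_0)$ has vanishing first coordinate, i.e.\ lies on the $y$-axis. I would also record here that, by the identity $T(\beta(t))=\Theta(\alpha)(t)$ noted in Section~\ref{section-envelope} together with the self-similarity relation $\Theta(\alpha)(t)=\alpha(g(t))$, the point $T(\beta(t_0))$ is in fact a point of the conjugate curve itself (traced out by the reparametrization $\Theta(\alpha)$), so that ``the tangent line at $T(\beta(t_0))$'' is unambiguous; implicit throughout, as in the whole envelope construction, is that $\alpha(t_0)\neq(0,1)$, so that the direction $J(\alpha(t_0)-(0,1))$ is nonzero.

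For the second assertion I would evaluate the expression for $\overrightarrow{\mathbf{t}}^\beta$ from Remark~\ref{remark-tangent-beta} at $t_0$. With $\overrightarrow{\mathbf{t}}(t_0)=(1,0)$, $\overrightarrow{\mathbf{n}}(t_0)=(0,1)$ and $r:=\|\alpha(t_0)-(0,1)\|$ this yields
$$\overrightarrow{\mathbf{t}}^\beta(t_0)=-\frac{a_0}{r}(0,1)-\frac{b_0-1}{r}(1,0)=\frac1r\,(1-b_0,\,-a_0).$$
Since $T$ is linear, the tangent direction of the curve $t\mapsto T(\beta(t))$ at $t_0$ is $T(\overrightarrow{\mathbf{t}}^\beta(t_0))$, which is parallel to $(1-b_0,a_0)$. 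On the other hand $J(\alpha(t_0)-(0,1))=J(a_0,b_0-1)=(-(b_0-1),a_0)=(1-b_0,a_0)$, so the two directions coincide, which is exactly the claim.

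I do not expect a genuine obstacle: the lemma is a coordinate computation once the earlier formulas are available. The only points needing a little care are (i) fixing the orientation and sign conventions consistently, so that one works with $\overrightarrow{\mathbf{n}}(t_0)=(0,1)$ (and a brief check that the alternative $\overrightarrow{\mathbf{n}}(t_0)=(0,-1)$ yields the same $\beta(t_0)$ and the same tangent line, as guaranteed by Proposition~\ref{prop-envelope}); and (ii) justifying via $T\circ\beta=\Theta\circ\alpha$ and $\Theta(\alpha)=\alpha\circ g$ that $T(\beta(t_0))$ really is a point of the conjugate curve, so that referring to its tangent line is legitimate.
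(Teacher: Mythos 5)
Your proof is correct and follows essentially the same route as the paper's: substitute $\overrightarrow{\mathbf{t}}(t_0)=(1,0)$, $\overrightarrow{\mathbf{n}}(t_0)=(0,1)$ into the envelope formula of Proposition~\ref{prop-envelope} to get $\beta(t_0)=(0,2b_0-1)$, and into the tangent formula of Remark~\ref{remark-tangent-beta} to see that $T(\overrightarrow{\mathbf{t}}^\beta(t_0))=J(\alpha(t_0)-(0,1))$ up to normalization. The extra care you take (checking that $T\circ\beta$ lands back on the conjugate curve via $\Theta(\alpha)=\alpha\circ g$, and noting $\alpha(t_0)\neq(0,1)$) is not in the paper but is a harmless and reasonable addition.
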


\begin{proof} Recall that $J$ is rotation through an angle of $\pi/2$, and $T$ reflection with respect to the $x$ axis. Let us also recall that the tangent vector to the curve
$$\beta(t) = (0,1)+2\langle\alpha(t)-(0,1),\overrightarrow{\mathbf{n}}(t)\rangle\;
\overrightarrow{\mathbf{n}}(t),$$ is given by
$$\overrightarrow{\mathbf{t}}^\beta(t) = -\langle\frac{\alpha(t)-(0,1)}{||\alpha(t)-(0,1)||},\overrightarrow{\mathbf{t}}(t)\rangle\;
\overrightarrow{\mathbf{n}}(t)-\langle\frac{\alpha(t)-(0,1)}{||\alpha(t)-(0,1)||},\overrightarrow{\mathbf{n}}(t)\rangle\;
\overrightarrow{\mathbf{t}}(t).$$

Now, if $\alpha(t_0)$ is a point on the conjugate distance trisector
curve with horizontal tangent line, then
$$\overrightarrow{\mathbf{t}}(t_0)=J(\overrightarrow{\mathbf{t}}(t_0))=(1,0),\quad
\overrightarrow{\mathbf{n}}(t_0)=(0,1).$$
Thus,
$$\beta(t_0) =(0,1)+2\langle(x(t_0),y(t_0)-1),(0,1)\rangle\; (0,1)= (0,2y(t_0)-1),$$
and therefore, $T(\beta(t_0))=(0,-2y(t_0)+1)$ is a point where the conjugate curve crosses the $y$ axis. We will refer to this point as the {\it  crossing point associated\/} to $\alpha(t_0)$
\bigskip

Moreover, if we write
$$\frac{\alpha(t_0)-(0,1)}{||\alpha(t_0)-(0,1)||}=(a,b),$$
then
$$\overrightarrow{\mathbf{t}}^\beta(t_0) = -(b,a),$$
and
$$T(\overrightarrow{\mathbf{t}}^\beta(t_0)) = (-b,a) =J(a,b).$$
\end{proof}

\begin{center}
\includegraphics[width=10cm]{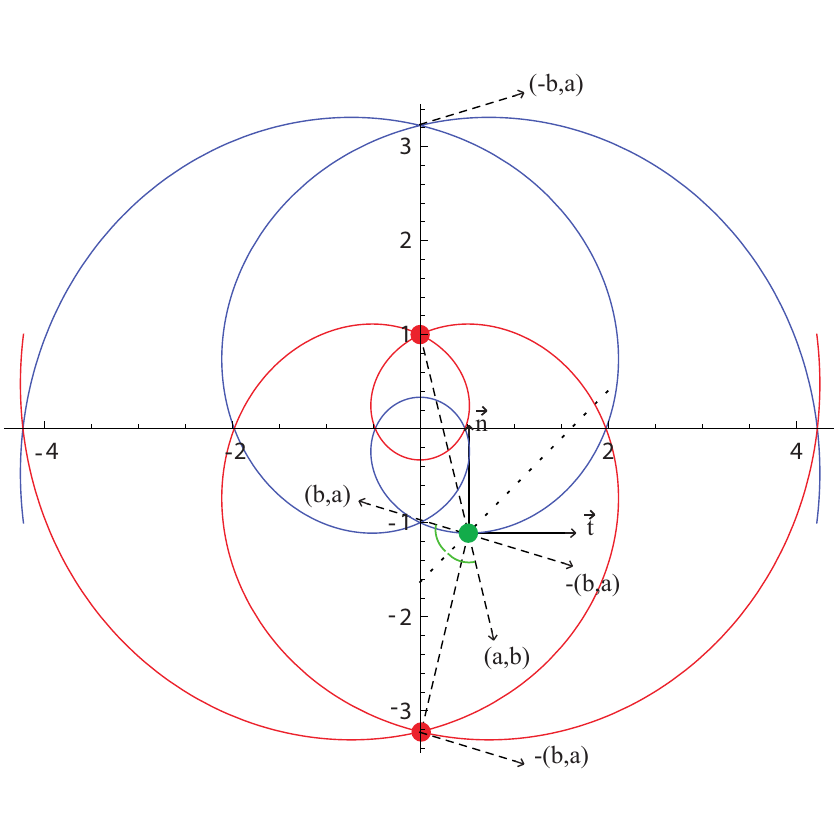}

Figure IX. Visualization of the statement of Lemma \ref{lema-horizontal}.
\end{center}

A consequence of this is that when $\alpha(t_0)$ is below the line $y=-1$,
then the associated crossing point is above the line $y=1$ and one of the
tangent vectors to the conjugate curve at that point has both coordinates strictly positive, as shown in Fig. IX.
Clearly, an analogous result holds true when $\alpha(t_0)$ is over the line $y=1$. In this case,
the associated crossing point is under the $x$-axis and a
tangent vector can be taken with both coordinates strictly negative.

\section{If the distance trisector curve were an algebraic curve...}

 We have seen in Section \ref{series} that the distance trisector
curve can be parametrized as $\alpha(t) = (t,f(t))$, where the
 function $f$ is analytic and its Taylor
expansion has coefficients in
${\mathbb Q}[\sqrt3]$. To make the connection with the aim of this paper we now prove:

\begin{lemma}
If the distance trisector curve were an algebraic curve,  defined by an implicit equation $P(x,y)=0$
with $P\in{\mathbb R}[x,y]$, then we can assume that
$P\in{\mathbb Q}[\sqrt3][x,y]$.
\end{lemma}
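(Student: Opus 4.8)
The plan is to show that if the trisector curve is algebraic, then among all defining polynomials there is one with coefficients in ${\mathbb Q}[\sqrt3]$. First I would recall the standard fact that an irreducible real algebraic curve has, up to a real scalar multiple, a unique defining polynomial; more precisely, if $P(x,y)=0$ and $Q(x,y)=0$ define the same curve with infinitely many points and $P$ is irreducible, then $P$ divides $Q$, so if $Q$ also has the same degree (or if we take $P$ of minimal degree) then $Q=cP$ for a constant $c$. So I may assume $P$ is the minimal-degree defining polynomial, unique up to scaling; normalize it, say by requiring some fixed nonzero coefficient to equal $1$.

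Next I would use the analytic parametrization $\alpha(t)=(t,f(t))$ with $f(t)=\sum_{n\ge0} c_n t^n$ and $c_n\in{\mathbb Q}[\sqrt3]$, as established in Section \ref{series}. Substituting into $P$, the identity $P(t,f(t))=0$ holds for all $t$ in a neighborhood of $0$, hence as a formal power series identity: every coefficient of $t^k$ in $P(t,f(t))$ vanishes. Writing $P=\sum a_{ij}x^iy^j$, each such coefficient is a ${\mathbb Q}$-linear combination (with coefficients that are polynomials in the $c_n\in{\mathbb Q}[\sqrt3]$) of the unknowns $a_{ij}$. Thus the vector of coefficients $(a_{ij})$ lies in the kernel of a matrix $M$ with entries in ${\mathbb Q}[\sqrt3]$. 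Since $P$ exists and is unique up to scalar, this kernel is one-dimensional over ${\mathbb R}$; but a linear system with coefficients in the field ${\mathbb Q}[\sqrt3]$ that has a one-dimensional solution space over ${\mathbb R}$ already has a one-dimensional solution space over ${\mathbb Q}[\sqrt3]$ (Gaussian elimination stays in the field, and the solution space dimension is field-independent). Picking a nonzero solution over ${\mathbb Q}[\sqrt3]$ gives a polynomial $P\in{\mathbb Q}[\sqrt3][x,y]$, and by uniqueness up to a real scalar it defines the same curve.

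The main obstacle I anticipate is the uniqueness claim: one must know that the minimal defining polynomial of the trisector curve is unique up to a real scalar, i.e. that the curve is irreducible (or at least that its real zero set, being a nondegenerate analytic arc, pins down a unique minimal polynomial). This needs a short argument: the trisector curve contains an analytic arc with infinitely many points, so any polynomial vanishing on it vanishes on the Zariski closure, which is a single irreducible curve $V(P_0)$ with $P_0$ irreducible; then every defining polynomial is a multiple of $P_0$, and a defining polynomial of minimal degree equals $P_0$ up to a constant. The rest — translating the functional identity into a linear system and invoking field-independence of kernel dimension — is routine linear algebra over ${\mathbb Q}[\sqrt3]$, requiring no real work beyond bookkeeping. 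One should also note the harmless point that the minimal polynomial, being determined up to scaling, can be normalized so its coefficients are genuinely in ${\mathbb Q}[\sqrt3]$ rather than merely proportional to such; this is immediate once a nonzero ${\mathbb Q}[\sqrt3]$-solution of the system is exhibited.
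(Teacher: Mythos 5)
Your proof is correct and follows essentially the same route as the paper: reduce to an irreducible (minimal-degree) defining polynomial, unique up to a real scalar, translate the identity $P(t,f(t))\equiv 0$ into a homogeneous linear system in the coefficients of $P$ whose entries lie in ${\mathbb Q}[\sqrt3]$ (via the Taylor coefficients of $f$), and conclude that the normalized solution lies in ${\mathbb Q}[\sqrt3]$. Your explicit remarks on the field-independence of the kernel dimension and on why an analytic arc pins down a unique irreducible polynomial merely spell out steps the paper leaves implicit.
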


\begin{proof}

We can  suppose that $P$ is an irreducible polynomial of
total degree $\le n$.
Also,  for any non
zero real number $a$,  the implicit equation $(aP)(x,y)=0$ defines the same algebraic curve; but, by irreducibility of $P$, if we fix any non vanishing coefficient in the
polynomial, then there is just one possible implicit equation.
\bigskip

Also, since $\alpha(0) = (0,\frac13) = (0,f(0))$, it is better to
write the polynomial $P$ in terms of the power basis
$\{x^i(y-\frac13)^j\}_{i,j}$:
$$P(x,y) = \sum_{i+j\le
n}\frac{p_{i,j}}{i!j!}x^i\left(y-\frac13\right)^j,\qquad
p_{i,j}\in{\mathbb R}.$$ We will now show that all the coefficients
$p_{i,j}$ can be chosen in ${\mathbb Q}[\sqrt3]$.
\bigskip

Let us write the Taylor expansion of $f$ as
$$
f(t) = \sum_{i\in{\mathbb N}} m_i t^i=\frac13+m_1t+m_2t^2+\dots,
$$
where $m_i\in {\mathbb Q}[\sqrt3]$.
Furthermore, since $f$ is obviously an even function, we can suppose that $m_{2k+1}=0$ for $k\in{\mathbb N}$.
\bigskip

From the equation $P(x,y)=0$ we have that $P(t,f(t))
=0$. Therefore, the derivatives also satisfy
$\frac{d^k\ }{dt^k}{|}_{t=0}P(t,f(t)) =0$, for any $k\in{\mathbb N}$.
\bigskip

The first derivative is
$$
\frac{d\ }{dt}|_{t=0}P(x(t),y(t))= P_x(0,\frac13)+P_y(0,\frac13)f'(0)=P_x(0,\frac13)=p_{1,0},
$$
since $f'(0)=m_1=0$. Therefore, $p_{1,0} = 0$.

\medskip

The second derivative is
$$\begin{array}{rcl}
\frac{d^2\ }{dt^2}|_{t=0}P(t,f(t))&=& P_{xx}(0,\frac13)+ 2 P_{xy}(0,\frac13)f'(0)+ P_{yy}(0,\frac13)(f'(0))^2+ P_y(0,\frac13)f''(0)\\[3mm]
&=&p_{2,0}+ p_{0,1}m_2,
\end{array}
$$
and so on.

Thus, in general the condition $\frac{d^k\ }{dt^k}{|}_{t=0}P(x(t),y(t))
=0$ can be written as a homogeneous linear equation in the
unknowns $p_{i,j}$, where the coefficients are computed from $m_i$
through sums or products; and since we have supposed that the curve is
algebraic,  there are solutions to all these equations. We can moreover suppose that
there is a coefficient of $P$  equal to $1$, for if $p_{i_0,j_0}\ne 0$,
then the $i_0,j_0$ coefficient of $\frac{P}{p_{i_0,j_0}}$ is $1$.
But assuming this, the solution is unique, because the polynomial $P$ is irreducible.

Since all the coefficients in
the system belong to ${\mathbb Q}[\sqrt3]$, the same holds
for its solution $\{p_{i,j}\}_{0\le i+j\le n}$. Thus, $P\in{\mathbb
Q}[\sqrt3][x,y]$, as stated.
\end{proof}

\bigskip

Now,  the conjugation map:
$a+b\sqrt3\to a-b\sqrt3$ in the  field  ${\mathbb
Q}[\sqrt3]=\{a+b\sqrt3\ |\ a,b\in{\mathbb Q}\}$ extends to
the polynomial ring ${\mathbb Q}[\sqrt3][x,y]$.
And if an algebraic curve is defined by an equation $P(x,y) = 0$ with $P\in {\mathbb Q}[\sqrt3][x,y]$, then
the conjugate polynomial, $\overline{P}$, also defines an algebraic curve.  Therefore and applying this to our case, if the distance trisector curve were an algebraic curve, its conjugate curve would be algebraic too.

\section{The conjugate distance trisector curve is not an algebraic curve}

The main technical result is now:

\begin{lemma}\label{inifinite-crossings}
There is an infinite number of intersections between the $y$-axis and the conjugate distance trisector curve.
\end{lemma}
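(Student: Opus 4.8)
The plan is to run a self-propagating bootstrap driven by Lemma~\ref{lema-horizontal}, which turns every point of the conjugate curve with horizontal tangent into a crossing with the $y$-axis. Suppose $\alpha(t_0)$ is a point of the conjugate curve lying strictly below the line $y=-1$ and having horizontal tangent. By Lemma~\ref{lema-horizontal} the associated crossing point is $Q_0=(0,1-2y(t_0))$, which lies strictly above $y=3$, and at $Q_0$ the conjugate curve carries a tangent vector pointing into the open first quadrant. Follow the curve from $Q_0$ along this branch: it enters $\{x>0,\ y>1\}$ with $y$ increasing, and --- this is the step that will need the geometry, see the last paragraph --- it cannot escape in this way, so before returning to the $y$-axis it must reach a first point $\alpha(t_1)$ at which $y$ has a local maximum, hence a horizontal tangent; since the branch started at height $1-2y(t_0)$ and was climbing, $y(t_1)>1-2y(t_0)$. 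Now $\alpha(t_1)$ lies strictly above $y=1$, so Lemma~\ref{lema-horizontal} applies in its second form: the associated crossing point $Q_1=(0,1-2y(t_1))$ lies strictly below $y=-1$, and there the curve again carries an ``outward'' tangent vector, this time into the open fourth quadrant. From $y(t_1)>1-2y(t_0)>1$ one checks $|1-2y(t_1)|>|1-2y(t_0)|$. Following that branch into $\{x>0,\ y<-1\}$ with $y$ decreasing, the same turning argument yields $\alpha(t_2)$ below $y=-1$ with horizontal tangent and $y(t_2)<1-2y(t_1)$, whose associated crossing $Q_2$ again sits above $y=1$ with a tangent into the open first quadrant, so that the configuration at $Q_2$ is of the same type as the one at $Q_0$.

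Iterating produces crossing points $Q_k=(0,c_k)$, $k=0,1,2,\dots$, with $c_k$ alternating in sign and $|c_0|<|c_1|<|c_2|<\cdots$; in particular they are pairwise distinct, so the conjugate distance trisector curve meets the $y$-axis in infinitely many points. It remains only to start the chain. For this one can take as seed the point $(0,-1)$, through which the conjugate curve passes (Proposition~\ref{cruce}) and at which one of its tangent vectors points into $\{x>0,\ y<-1\}$ (Remark~\ref{tangent-at-(0,-1)}); feeding this branch into the same turning argument produces a first point $\alpha(t_0)$ strictly below $y=-1$ with horizontal tangent, and the bootstrap takes over. (Degenerate situations, where one of the extremal points happens to land on the $y$-axis, only make matters easier, since such a point is already a crossing.)

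The heart of the matter --- and the step I expect to be the real obstacle --- is the ``turning'' claim: a branch that leaves the $y$-axis into an open quadrant and heads away from it must curl back, producing a local extremum of $y$ at a strictly larger height, rather than running off to infinity with $y$ monotone. The combinatorics of Lemma~\ref{lema-horizontal} alone do not give this; it genuinely uses the shape of the conjugate curve. The route I would take is through the envelope and bisector description of Section~\ref{section-envelope}: at $\alpha(t)$ the tangent line bisects the angle $\widehat{P\alpha(t)\beta(t)}$, where $P=(0,1)$, where $\beta(t)=T(\Theta(\alpha)(t))$ lies on the reflected curve, and where $|\alpha(t)\beta(t)|=|\alpha(t)P|$ (Proposition~\ref{prop-envelope} and the remark following it). If $\alpha(t)$ ran off to infinity with nearly horizontal tangent, the ray from $\alpha(t)$ to $P$ would tend to an essentially horizontal direction, and the bisector condition would then force the ray from $\alpha(t)$ to $\beta(t)$ into a direction incompatible with the tangent remaining horizontal; hence the tangent direction is forced to keep rotating, which for an analytic curve means that genuine local extrema of $y$ occur at unbounded heights. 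Making this quantitative, and combining it with the explicit convergence and error bounds for the iterates $\alpha_i$ recalled in Section~\ref{extending-the-curve} --- which already display the spiral shape --- is where the work lies; granting it, the bootstrap above is purely formal and the lemma follows. Note finally that, since the conjugate curve is analytic and is not a line, it cannot meet the $y$-axis in a set with a finite accumulation point, so ``infinitely many crossings'' is in fact equivalent to ``crossings of unbounded height'', which is exactly what the bootstrap delivers.
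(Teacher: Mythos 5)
Your skeleton coincides with the paper's: seed the process at $(0,-1)$, use Lemma~\ref{lema-horizontal} to convert each horizontal-tangent point at height $y_0$ with $|y_0|>1$ into a crossing of the $y$-axis at height $1-2y_0$, of larger absolute value and opposite sign, and iterate to obtain infinitely many distinct crossings. But the step you yourself flag as ``the real obstacle'' --- that a branch leaving a crossing point into an open quadrant must turn back and produce a horizontal tangent at strictly greater height, rather than running off to infinity with $y$ monotone --- is exactly the content the paper supplies and your proposal does not. Your suggested route (the bisector condition forces the tangent direction to keep rotating) is left as a heuristic, and the appeal to the convergence of the iterates $\alpha_i$ cannot close it, since those are approximations on a bounded parameter interval. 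As written, the proof has a genuine gap at its load-bearing step, and the gap also infects your seed, which needs the same turning argument to produce the first horizontal tangent below $y=-1$.

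The paper closes this gap with a second envelope computation, dual to Lemma~\ref{lema-horizontal}: at a point $V=\alpha(s)$ with \emph{vertical} tangent one has $\overrightarrow{\mathbf{n}}(s)=(\mp 1,0)$, so the formula of Proposition~\ref{prop-envelope} gives $\beta(s)=(2x(s),1)$; that is, the reflected curve meets the line $y=1$ there, equivalently the conjugate curve itself passes through $(2x(s),-1)$. Hence every vertical-tangent point forces the branch to return to height $-1$. Since the branch leaves the crossing point $C_n$ going upward from height $y(t_n)>1$ and must later reach height $-1$ at the point $P_n$ so produced, an intermediate-value/Rolle argument yields the interior local maximum of $y$ --- the horizontal tangent $H_n$ at height $>y(t_n)$ --- that your bootstrap needs; symmetrically for the branches below the $x$-axis. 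The same computation shows the return point has abscissa $2x(s)$, which is what makes the abscissae of successive vertical-tangent points grow and keeps the interleaving $t_n<u_n<s_n<t_{n+1}$ going. So the missing idea is concrete: track the vertical tangents alongside the horizontal ones, and use the envelope map to send the former to the lines $y=\pm1$ just as it sends the latter to the $y$-axis.
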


\begin{proof}
Let $\alpha:{\mathbb R}\to {\mathbb R}^2$, $\alpha(t) =
(x(t),y(t))$, be a regular parametrization of the conjugate
curve. We are going to construct a sequence
$\{C_n=\alpha(t_n)=(0,y(t_n))\}_{n\in{\mathbb N}}$ of crossing
points such that
$$t_0 = 0, \qquad C_0=\alpha(0) = (0,\frac13),$$
$$t_1 > 0, \qquad C_1=\alpha(1) = (0,-1),$$
$$t_n<t_{n+1},\qquad 0\notin x(]t_n,t_{n+1}[),$$
$${\rm sg}(y(t_n)) = (-1)^n,\qquad |y(t_n)| <|y(t_{n+1})|,\qquad
{\rm and}\ |y(t_n)-1|\ge 2^{n-1}\ (n>0).$$

We will use some auxiliary sequences:
A sequence
$$\{V_n=\alpha(s_n)=(x(s_n),y(s_n))\}_{n\in{\mathbb N}}$$ of
points with vertical tangent line ---that is, $x'(s_n)=0$, such that
$$ x(s_n)<x(s_{n+1}),\qquad x(s_n)>2^n,$$
and a sequence
$$\{H_n=\alpha(u_n)=(x(u_n),y(u_n))\}_{n\in{\mathbb N}}$$ of
points with horizontal tangent line ---that is $y'(u_n)=0$, such that
$$|y(u_n)|<|y(u_{n+1})|,\qquad |y(u_n)|>2^n,$$
and such that
$$t_n<u_n<s_n<t_{n+1}.$$
Finally, we will need a sequence $\{P_n=\alpha(r_n)\}$ of points where the conjugate curve crosses the line $y=1$.

Throughout the proof, points on the reflected curve will be marked with a\ $\ \widetilde{ }\ $, whereas the corresponding points on the conjugate curve will go without the\ $\ \widetilde{ }\ $.

 Obviously, $H_0=(0,1/3)$, and the existence of the points $V_0=(x(s_0),y(s_0))$ and $P_0=C_1=(0,-1)$ was established in Proposition \ref{cruce}
\medskip

Now, since $V_0$ is between $C_0$ and $C_1$, then
$$t_0=0<s_0<t_1.$$ And from what has been said, such
a point is related to another point in the reflected curve, $\widetilde{P}_1$,
with second coordinate $=1$ (see Fig. X, left).

\begin{center}
\includegraphics[width=14cm]{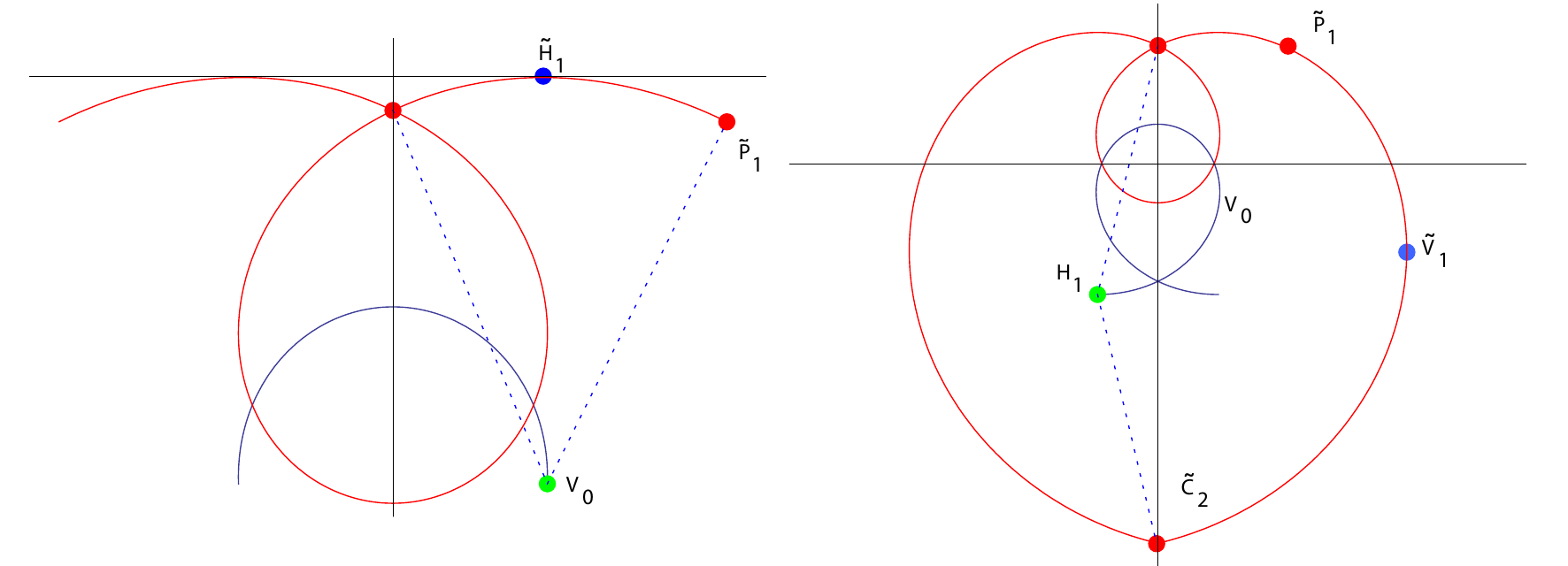}

Figure X. Construction of the first points where the tangent lines are horizontal or vertical .
\end{center}

Therefore, between $(0,1)$ and $\widetilde{P}_1$ there should be a point, $\widetilde{H}_1=(x(u_0),y(u_0))$, in
the reflected curve with horizontal tangent, and similarly for the
conjugate curve. Let us call the latter $H_1$. Notice that $t_1<u_1$  and  that the absolute
value of the second coordinate of $\widetilde{H}_1$ is greater than $1$, because
at $(0,1)$ the reflected curve has a tangent vector with second coordinate positive (see Remark \ref{tangent-at-(0,-1)}).

Since $H_1$ has an horizontal tangent line, it has associated a new
crossing point in the reflected curve $\widetilde{C}_2=(0,-y(t_2))$, and Proposition \ref{cruce} also gives that $-y(t_2)< -3$. Therefore $|y(t_2)-1|=y(t_2)-1>3-1=2$.

Now, between $\widetilde{P}_1$ and $\widetilde{C}_2$, there is a point
on the reflected curve with vertical tangent, $\widetilde{V}_1=(x(s_1),y(s_1))$, with $u_1<s_1<t_2$.
And by a similar reasoning as the one in Proposition \ref{cruce}, its first coordinate is greater than twice the first coordinate of $V_0$.
(see Fig. X, right).

We can then iterate the process: $\widetilde{V}_1$ generated the crossing $\widetilde{C}_2$, but because of the envelope construction of the reflected curve, $V_1$ also generates a point $\widetilde{P}_2$; and between $\widetilde{V}_1$ and $\widetilde{P}_2$, there is also a point in the
reflected curve with horizontal tangent, $\widetilde{H}_2$, so we have the corresponding points in the conjugate curve, and so on, as shown in Fig. XI:

\begin{center}
\includegraphics[width=14cm]{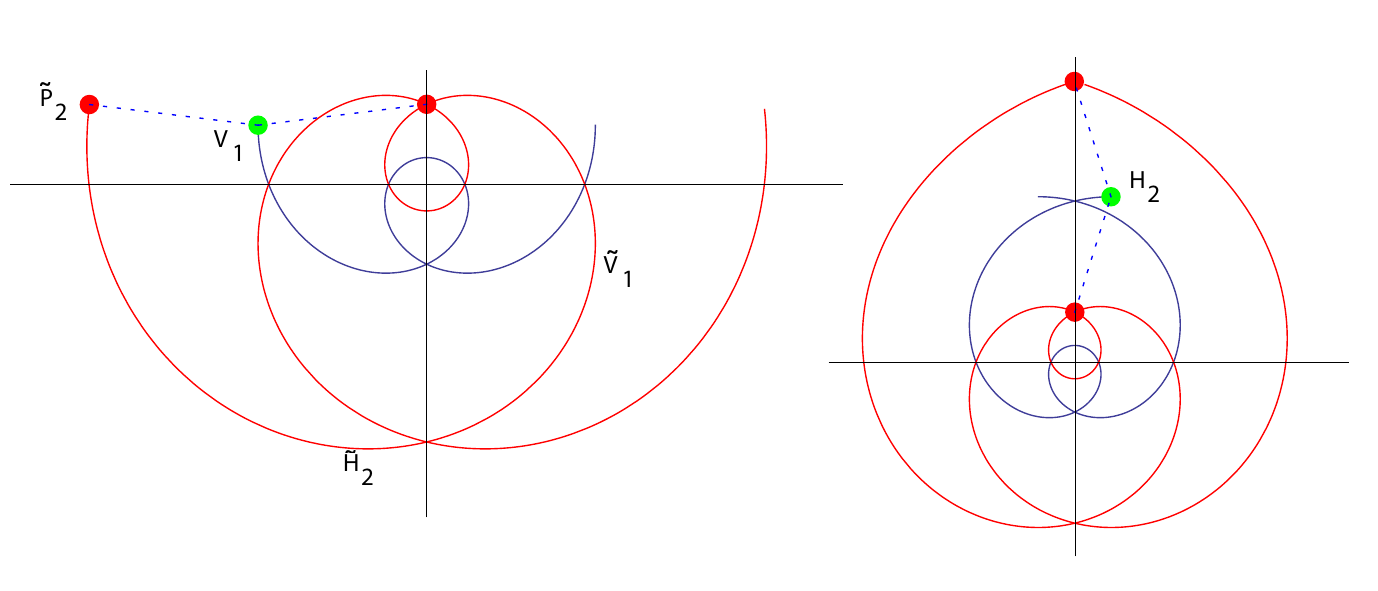}

Figure XI. Construction of the next points where the tangent lines are horizontal or vertical .

\end{center}


Let us now state our induction hypothesis;
we will suppose that $y(t_n)>0$, the other case being analogous:

Between $C_{n-1}=(0,y(t_{n-1}))$ and $C_n=(0,y(t_n))$ there is a point $V_{n-1}=(x(s_{n-1}),y(s_{n-1}))$ with $t_{n-1}<s_{n-1}<t_{n}$. This point generates another
point $\widetilde{P}_n$ in the reflected curve, whose second coordinate is less
than the second coordinate of $C_n$.

\begin{center}
\includegraphics[width=8cm]{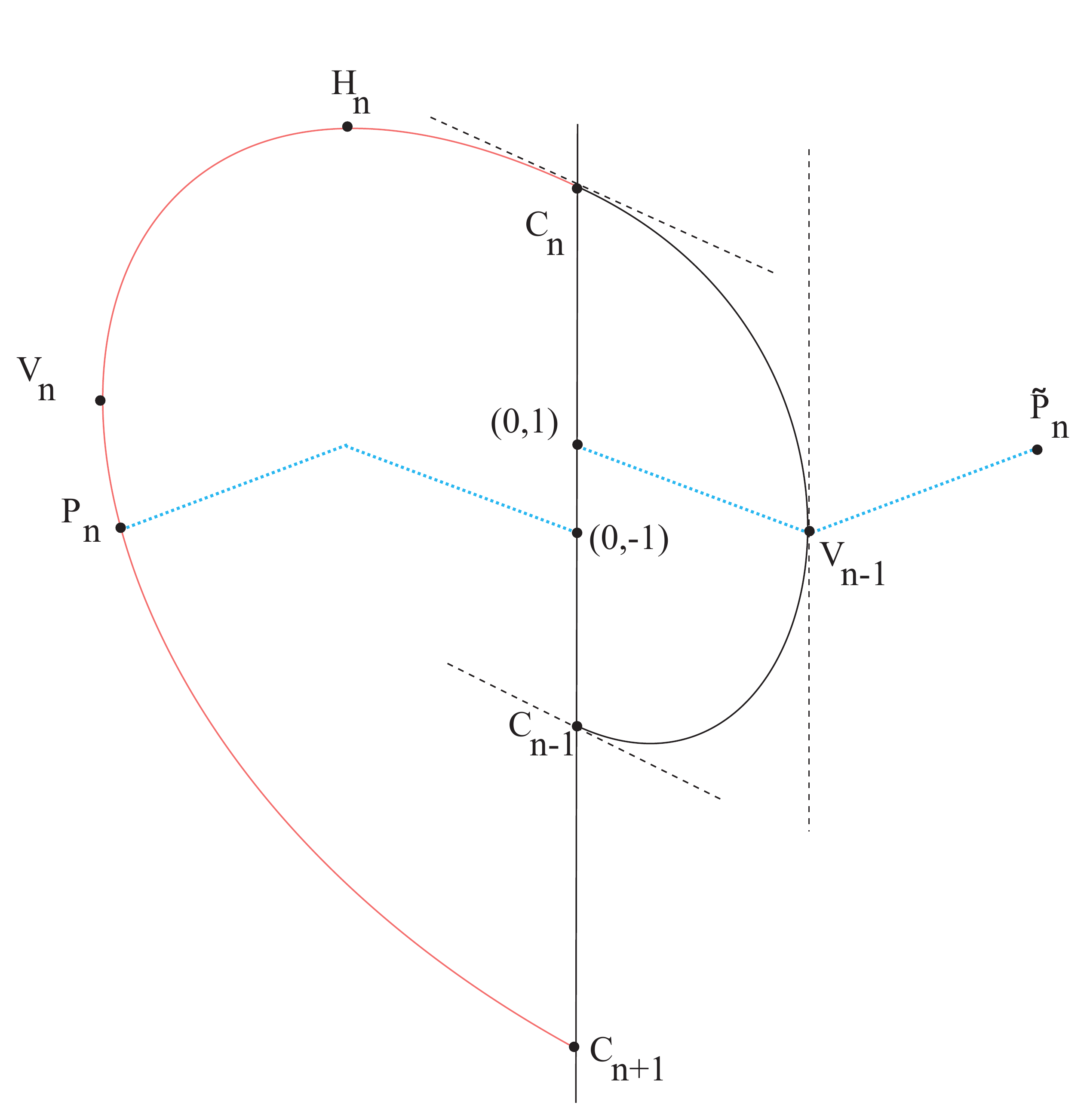}

Figure XII. Inductive proof of the existence of the crossing point $C_{n+1}$.
\end{center}

The point $\widetilde{P}_n$ then defines a point $P_n$ in the conjugate
curve. Since the conjugate curve goes from $C_n$ to $P_n$, and
since the tangent line at $C_n$ goes up, then there is a
new point $H_n=(x(u_n),y(u_n))$ with
$y(t_n)<y(u_n)$ and $t_n<u_n$.
This new point defines another new crossing point
$\widetilde{C}_{n+1}=(0,-y(t_{n+1}))$ in the reflected curve, and thereby a point
$C_{n+1}=(0,y(t_{n+1}))$ in the conjugate curve, and so on.

The fact now is
that, just as in Proposition \ref{cruce}, $-y(t_{n+1}) = 1+2(y(u_n)-1) > 1+2(y(t_n)-1)=2y(t_n)-1$.
Thus, $$|y(t_{n+1})-1|= -y(t_{n+1})+1 >2y(t_n)-1+1 >2\times 2^{n-1} = 2^n.$$

Finally, since the conjugate curve goes from $H_n$ to
$C_{n+1}$, then there is a new point $V_n=(x(s_n),y(s_n))$, with $u_n<s_n<t_{n+1}$, whose
distance to the $y$-axis is greater than the distance from $P_n$
to the $y$-axis. This distance is greater than twice the distance from $V_{n-1}$ to
the $y$-axis.

In particular, all the crossing points are distinct.
\end{proof}

We are now ready to conclude:

\begin{theorem}
The distance trisector curve is a transcendental curve.
\end{theorem}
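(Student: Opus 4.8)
The plan is to string together the pieces already established into a short proof by contradiction. First I would recall the two structural lemmas about the Taylor expansion: the distance trisector curve is parametrized as $\alpha(t)=(t,f(t))$ with $f$ analytic and all Taylor coefficients $m_i$ lying in ${\mathbb Q}[\sqrt3]$; and consequently, if the curve were algebraic with defining polynomial $P\in{\mathbb R}[x,y]$, then after normalizing (using irreducibility of $P$ to pin down a unique equation once a coefficient is fixed to $1$), we may assume $P\in{\mathbb Q}[\sqrt3][x,y]$. This is exactly the content of the two Lemmas in the section ``If the distance trisector curve were an algebraic curve\dots''.

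Next I would invoke the conjugation automorphism $a+b\sqrt3\mapsto a-b\sqrt3$ of ${\mathbb Q}[\sqrt3]$, extended coefficientwise to ${\mathbb Q}[\sqrt3][x,y]$. Applying it to $P$ yields $\overline P\in{\mathbb Q}[\sqrt3][x,y]$, and the zero set $\overline P(x,y)=0$ is again an algebraic curve. The key observation is that $\overline P(x,y)=0$ is satisfied by the conjugate distance trisector curve: indeed, the conjugate curve is precisely the curve whose Taylor expansion near $(0,\frac13)$ is obtained from that of the trisector curve by replacing $\sqrt3$ with $-\sqrt3$ (Section \ref{series}), so conjugating the identity $P(t,f(t))=0$ term by term gives $\overline P(t,\overline f(t))=0$ in a neighborhood of the origin, where $\overline f$ is the conjugate power series; since both $\overline P(x,y)$ and the conjugate curve are analytic, this identity propagates along the whole (real-analytically continued) conjugate curve. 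Hence the conjugate curve would be algebraic, cut out by $\overline P$.

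Finally I would derive the contradiction from Lemma \ref{inifinite-crossings}: the conjugate distance trisector curve meets the $y$-axis in infinitely many distinct points. But if it were contained in the algebraic curve $\overline P(x,y)=0$, then substituting $x=0$ gives a one-variable polynomial $\overline P(0,y)\in{\mathbb Q}[\sqrt3][y]$ which vanishes at infinitely many values of $y$; this forces $\overline P(0,y)\equiv 0$, i.e.\ $x\mid \overline P(x,y)$. Since $\overline P$ is the conjugate of the irreducible $P$, it is itself irreducible (conjugation is a ring automorphism), so the only way $x$ can divide it is $\overline P=cx$ for a constant $c$ --- but then the conjugate curve would be the $y$-axis itself, contradicting that it has a horizontal tangent at $(0,\frac13)$ (equivalently, that it also passes through points such as $V_0\approx(0.524,-0.244)$ off the $y$-axis). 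This contradiction shows the distance trisector curve cannot be algebraic, hence it is transcendental.

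The main obstacle, and the step deserving the most care, is the middle one: rigorously justifying that the conjugate curve genuinely satisfies $\overline P=0$ globally, not just formally at the level of Taylor coefficients near the origin. One must be precise that the ``conjugate curve'' referred to in Lemma \ref{inifinite-crossings} --- namely the real-analytic curve obtained by iteratively extending the local solution via Lemma 11 of \cite{AMT} (Section \ref{extending-the-curve}) --- is the analytic continuation of the same local branch whose coefficients are the conjugates $\overline{m_i}$, so that the vanishing $\overline P(t,\overline f(t))=0$ near $0$ extends by the identity theorem for analytic functions to all of the curve traced out in Lemma \ref{inifinite-crossings}. Once that identification is nailed down, the rest is immediate.
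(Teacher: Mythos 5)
Your proposal is correct and follows the same overall architecture as the paper: pass to the conjugate polynomial $\overline P$, identify its zero set with the conjugate curve, and derive a contradiction from the infinitely many crossings with the $y$-axis supplied by Lemma \ref{inifinite-crossings}. Where you differ is in the two places the paper is most terse, and in both you are more careful. First, the paper simply asserts that if the trisector curve is algebraic then the conjugate curve is algebraic too; you make explicit the mechanism --- conjugating the coefficient identities coming from $P(t,f(t))=0$ term by term to get $\overline P(t,\overline f(t))=0$ near $0$, then propagating along the analytic continuation by the identity theorem --- which is exactly the point that needs to be nailed down and which you rightly flag as the delicate step. Second, the paper closes by citing B\'ezout's theorem in the form ``the number of intersections between any two algebraic curves is always finite,'' which as stated is not quite right: two algebraic curves with a common component meet in infinitely many points, so one must rule out the $y$-axis being a component of $\overline P=0$. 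Your endgame does precisely this: infinitely many roots of $\overline P(0,y)$ force $x\mid\overline P$, and irreducibility of $\overline P$ (preserved under the conjugation automorphism) then forces $\overline P=cx$, which is absurd since the conjugate curve is locally a graph over the $x$-axis near $(0,\tfrac13)$. So your version is a more elementary and slightly more rigorous rendering of the same proof; nothing is missing.
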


\begin{proof}
As already mentioned, if the distance trisector curve were an algebraic curve, then its
conjugate curve would be an algebraic curve too.

But since the number
of intersections of the conjugate curve with the  $y$-axis is
infinite, this curve cannot be algebraic because, according the to Bézout's Theorem,
the number of intersections between any two algebraic curves is always finite.

Therefore, the distance trisector curve is transcendental, as claimed.
\end{proof}

\bigskip

\end{document}